\newtheorem{theorem}{Theorem}[section]
    \newtheorem{proposition}[theorem]{Proposition}
    \newtheorem{lemma}[theorem]{Lemma}
    \newtheorem{remark}[theorem]{Remark}
    \newtheorem{defn}[theorem]{Definition}
    \numberwithin{equation}{section}
    \numberwithin{theorem}{section}
    \renewcommand{\(}{\left(} \renewcommand{\)}{\right)}
    \newcommand{\noi}{\noindent}\newcommand{\nn}{\nonumber}
    \renewcommand{\epsilon}{\varepsilon}
    \newcommand{\e}{\mbox{e}}
\begin{document}
    
    \title {The asymptotic expansion for $n!$ and Lagrange inversion formula.}
    \author {Stella Brassesco \and
     Miguel A. M\'endez}
     \date{}
     \maketitle
     
    \begin{center} { Departamento de Matem\'aticas,
    Instituto Venezolano de Investigaciones Cient\'{\i}ficas, Apartado Postal
    20632 Caracas
    1020--A, Venezuela }
    \end{center}
    \begin{center}
    \texttt{sbrasses@ivic.ve},
    \texttt {mmendez@ivic.ve  }
    \end{center}
    \begin{abstract}
    We obtain an explicit simple  formula for the coefficients
    of the  asymptotic expansion for the factorial of a natural number,
    \begin{equation}
    \nonumber n!=  n^n\sqrt{2\pi n}\,\mbox{e}^{-n}\big\{
    1\,+\,\frac{a_1}{n}\,+\,
    \frac{a_2}{n^2}+\,\frac{a_3}{n^3}+\cdots\big\},
    \end{equation}
    in terms of
    derivatives of powers of an elementary function that we call
     {\it  normalized left truncated exponential function}.
    The unique explicit expression for the ${a_k}$  that appears to be known
    is that of
    Comtet in \cite{Comtet}, which is given in terms of sums of
    associated Stirling numbers of the first kind.
    By considering the bivariate generating function of
     the associated Stirling numbers of the second kind,
      another expression for the coefficients in terms of them
       follows also from our
    analysis.
    Comparison with Comtet's  expression
     yields an
    identity which is somehow unexpected if
     considering the
     combinatorial
    meaning of the terms. It suggests by analogy  another possible formula
     for the
    coefficients, in terms of a {\it normalized left truncated logarithm},
    that in fact
     proves to be true.  The resulting coefficients, as well as the
     first ones are identified
     via the
    Lagrange inversion formula as the odd coefficients of the inverse of a
     pair of formal series. This in particular  leads to  the identification
     of a couple of
     simple implicit
    equations, which permits us to obtain also some  recurrences related to the
     ${a_{k}}'s$.
    \end{abstract}
    \noindent {\small Keywords: $\Gamma$ function, asymptotic expansions,
    Lagrange inversion formula,
    Stirling numbers.}
      \section {Introduction}
      Consider, for $t>0$
    $$
    \e^t\,=\,\sum_{j\ge 0}\frac{t^j}{j!} \quad \mbox{ or }\quad
     \sum_{j\ge 0}e^{-t}\,\frac{t^j}{j!}\,=1,
     $$
    which means that we may think $\e^{-t}\,\frac{t^j}{j!}$ as the probability
    $p_j$ that a
    random  variable $X_t$ takes the value $j$. This amounts to say that $X_t$
    has a Poisson
    distribution with parameter $t$, whose expectation, variance and
    characteristic function,
    $\varphi_{\!_{X_t}}(\theta)$ for
     $\theta \in \mathbb R $ are:
    \begin{align}
    \nonumber
    &E(X_t)=t\qquad Var(X_t)=t\\
    \label{ftrans} &\varphi_{\!_{X_t}}(\theta):=\,
    E\big(\e^{i\theta\,X_t}\big)\,=\,\sum_{j\ge 0}\e^{i\theta j}
     p_j= \sum_{j\ge 0}\e^{-t}
    \frac{(\e^{i\theta }t)^j}{j!}\,= \,\e^{t\,(\e^{i\theta}-1)}
    \end{align}
    Take now $t=n\in\mathbb N$, and consider the random variable
    \begin{equation}
    \label{zn} Z_n\,=\,\frac{X_n-n}{\sqrt{n}}.
    \end{equation}
    From the Central Limit Theorem we know that the characteristic
     function of $Z_n$
    converges pointwise to that of a standard Normal random variable:
     for
     $\theta\in \mathbb R$,
    \begin{equation}
    \label{clt}
     E(\e^{i\theta\,Z_n})
    \rightarrow\e^{-\theta^2/2}\quad\mbox{ as } n\to \infty.
    \end{equation}
    On the other hand, projecting the series \eqref{ftrans} with
     $t=n$ over $\e^{-i\theta n}$
    in $L^2[-\pi,\pi]$ we obtain
    $$\int^{\pi}_{-\pi}\!d\theta\,E\big(\e^{i\theta\,X_n}\big)\e^{-i\theta n}=
    \int^{\pi}_{-\pi}\!d\theta\,\frac{\e^{-n}\,n^n}{n!}\,= \,2\pi
    \frac{\e^{-n}\,n^n}{n!}
    $$
    Recalling the definition \eqref{zn} of $Z_n$, a change of variables in the
    first integral
    above
     yields
    $$
    \int^{\pi\sqrt{n}}_{-\pi\sqrt{n}}\!d\theta\, E\big(\e^{i\theta \,Z_n
    }\big)\,=\,
    \int^{\pi\sqrt{n}}_{-\pi\sqrt{n}}\!d\theta\, E\big(\e^{i\frac{\theta}{\sqrt
    n}(X_n-n)}\big) \,=\,2\pi\,\sqrt{n}\,\frac{\e^{-n}\,n^n}{n!},
    $$
    so
    \begin{equation}
    \label{n!1}
     n!=\frac{\sqrt{2\pi n}\,
    \e^{-n}\,n^n}{\frac{1}{\sqrt{2\pi}} \int^{\pi\sqrt{n}}_{-\pi\sqrt{n}}
    d\theta\,
    E\big(\e^{i\theta \,Z_n }\big)}
    \end{equation}
    If the convergence in \eqref{clt} could be seen to hold uniformly, we would
    recover
    Stirling's formula:
    \begin{equation}
    \nn
     n!\approx \sqrt{2\pi n}\,\e^{-n}n^n
    \end{equation}
    It is not difficult to see that this is indeed the case, as
     has been noticed by several
    authors, see for instance \cite {pinsky} and the references therein,  or
    Sect 27,
    exercise 18 of \cite {Billingsley}. L. B\'aez--Duarte considered
    in  \cite{bd}
    the above method
     to obtain the asymptotics for the general term
     of a convergent power series of a function $f$  with non--negative terms,
     under conditions  that guarantee that a uniform Central Limit Theorem
     for the corresponding random variables holds and, as a particular instance,
     he obtains the asymptotics for
     $p(n)$, the number of partitions of an integer $n$. The approach
     has a longer history, and  goes back to the articles by W.K. Hayman
     \cite{H} and
     E.A. Bender \cite{B}.
     
     Here, we obtain an asymptotic expansion for the denominator in \eqref{n!1}
     in terms of inverse powers of $n$, and show that it
     can be explicitly   inverted, yielding a simple expression for
     the coefficients $a_k$  of the expansion of $n!$,
      in terms of an elementary function
     (see Theorem \ref {t1} below).
     To our knowledge, the unique explicit formula  for those coefficients
     is  that given by Comtet as an exercise in page 267 of \cite{Comtet},
     in terms of sums of derangements $d_3$ of
     integers with certain restrictions, related with the
      Stirling numbers of the first kind.
     As a corollary of the proof of our first formula,
      after recalling the bivariate
     generating function for the associated Stirling number
     of the second kind $S_3$,
     we obtain a second  expression for the coefficients,
      which is exactly alike  to that  in
     \cite{Comtet}, and
     in particular yields an   unexpected
      equality  between certain alternating sums of  associated
       Stirling numbers
       of the first and  second kind. This identity suggested
        another  expression
       for the $a_k$,
       in terms of a {\it normalized left truncated} logarithm
        function corresponding
        exactly with the first
       one,
        that we proved to be true (see Theorem \ref {t3}).
       Finally, we recognize in these two expressions the odd terms of the
        series obtained by  the  Lagrange inversion formula
        applied to appropriate
        functions. Each of these two inverses satisfy a simple
        implicit equation.
       By differentiation  we obtain a couple of differential equations
        that lead to a pair of recurrent relations
         that could be used also
        to generate the ${a_{k}}'s$. One of them is similar to that obtained 
        in
        \cite{Marsaglia} using other techniques (see also \cite{Corless}).
        
    Although we state the results as expressions for the
    coefficients of the expansion of  $n!$ in inverse powers of $n$,
     they could be stated as well
    as expressions for the coefficients of the
     asymptotic expansion of the gamma
    function  in inverse powers of $z\in \mathbb C$,
     for $|z|\to \infty$ in the region $\arg(z)\in [0,\pi)$. This  follows
    at once  from the existence of this last expansion. For this and
    other properties
    of  the gamma, see \cite{aar}.
      Let us also
      note that the first few coefficients, that in particular  appear
      in most books and tables of
       special functions (for instance \cite {GR} or \cite {Olver}),
       can be computed one by one by exponentiating the well known
        Stirling series
       for the logarithm of the gamma, or from recurrence relations
        obtained by different
       techniques (as in \cite {W} or  \cite {Marsaglia}). We think however
       that having explicit and  simple formulae has an intrinsic interest.
       In particular, we  show that they render some combinatorial identities, and
       relate with the  Lagrange inversion formula for elementary functions,
        what  helps to underestand  the relation of the expansion with
        the Lambert  W function appearing in \cite{Corless}.

    We provide precise statements  concerning the asymptotic
    expansions, and obtain the formulae for the coefficients in the next
    section.
    In the last  section, we show the
     relationship with the Lagrange inversion formula, obtain  recursive
    formulae and generalise the identities obtained between sums of $S_3$ and
    $d_3$.

      \section{ The asymptotic expansion and formulae for the coefficients }
     
      Let us denote by $\partial^k f$ the $k$th
      derivative of a function $f$,
      with respect to its  real or complex variable.
      Our first result is  the following  formula for the coefficients of the
      expansion
      of $n!$ in powers of $\frac 1n$.
      \begin{theorem}
      \label{t1}
      Let $n\in  \mathbb N$. Then the coefficients of the expansion
      \begin{equation}
      \label{asym}
       n!\asymp\sqrt{2\pi n }\,\e^{-n} n^n \Big(
     \,\sum_{k\ge 0} \frac{1}{n^k}\,a_k \Big)\quad
     \end{equation}
     \begin{align}
      \label{ak}
     &\mbox{are given by } \quad
     a_k= \frac{1}{2^k\,k!}\,\partial^{2k} (G^{-\frac{2k+1}{2}})(0)
     \\ \nn
     &\mbox {where }\qquad
       G(x)\,=\,2\,\frac{\e^x-1-x}{x^2}\,=\, 2\sum_{j\ge 0}\frac{x^j}{(j+2)!}
     \end{align}
      \end{theorem}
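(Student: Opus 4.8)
\noindent The plan is to compute the quantity $n!/(\sqrt{2\pi n}\,\e^{-n}n^n)$, which by \eqref{n!1} is exactly the reciprocal of the denominator, \emph{directly} as a Laplace integral rather than by first expanding that denominator and then inverting a power series. The starting point is the Euler integral $n!=\int_0^\infty t^n\e^{-t}\,dt$; scaling $t=ns$ and then substituting $s=\e^{y}$ turns the factorial into
\begin{equation*}
\frac{n!}{\sqrt{2\pi n}\,\e^{-n}n^n}=\sqrt{\frac{n}{2\pi}}\int_0^\infty \e^{-n(s-1-\ln s)}\,ds=\sqrt{\frac{n}{2\pi}}\int_{-\infty}^{\infty}\e^{y}\,\e^{-\frac{n}{2}y^2 G(y)}\,dy,
\end{equation*}
where the very definition of $G$, namely $\e^{y}-1-y=\tfrac12 y^2 G(y)$, produces the normalized left truncated exponential in the exponent. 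This already explains why $G$ governs the coefficients.

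The key step is a change of variables that makes the integral Gaussian. I set $\sigma=y\sqrt{G(y)}$, so that $\sigma^2=2(\e^{y}-1-y)$; since $\e^y-1-y$ is nonnegative, convex, vanishes only at $y=0$ and is $\sim y^2/2$ there, the map $y\mapsto\sigma$ is an analytic diffeomorphism of $\mathbb R$ with $\sigma\sim y$ near the origin. Writing $s=\e^{y(\sigma)}$ for the inverse, the integral becomes exactly $\sqrt{n/2\pi}\int_{-\infty}^\infty \e^{-n\sigma^2/2}\,\partial_\sigma\e^{y(\sigma)}\,d\sigma$. Watson's lemma (the off-origin part is exponentially negligible, and the Gaussian weight controls the tails) then yields the asymptotic series, and since $\frac{1}{\sqrt{2\pi}}\int\sigma^{2k}\e^{-\sigma^2/2}d\sigma=(2k-1)!!=(2k)!/(2^k k!)$ while odd moments vanish, the coefficient is
\begin{equation*}
a_k=(2k-1)!!\,[\sigma^{2k}]\,\partial_\sigma \e^{y(\sigma)}=(2k+1)!!\,[\sigma^{2k+1}]\,\e^{y(\sigma)}.
\end{equation*}

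What makes the answer collapse to the stated form is the exact relation built into the substitution: $\e^{y(\sigma)}=1+y(\sigma)+\tfrac{\sigma^2}{2}$. Because $1+\sigma^2/2$ carries no odd powers of $\sigma$, the odd Taylor coefficients of $\e^{y(\sigma)}$ coincide with those of $y(\sigma)$, so $[\sigma^{2k+1}]\e^{y(\sigma)}=[\sigma^{2k+1}]y(\sigma)$. Now Lagrange inversion applied to $\sigma=y/\Phi(y)$ with $\Phi(y)=G(y)^{-1/2}$ gives $[\sigma^{2k+1}]y(\sigma)=\frac{1}{2k+1}[y^{2k}]\Phi(y)^{2k+1}=\frac{1}{2k+1}[y^{2k}]G(y)^{-(2k+1)/2}$. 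Substituting back and using $(2k-1)!!\,[y^{2k}]f=\partial^{2k}f(0)/(2^k k!)$ recovers $a_k=\frac{1}{2^k k!}\partial^{2k}(G^{-(2k+1)/2})(0)$, as claimed.

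I expect the main obstacle to be conceptual rather than computational: recognizing that the reciprocal of the denominator in \eqref{n!1} is best handled not by inverting its expansion but by re-expressing it through the Euler integral and the substitution $s=\e^y$, and then spotting the identity $\e^{y(\sigma)}=1+y(\sigma)+\sigma^2/2$ that kills the spurious $\e^{y}$ factor. If one instead follows the literal ``expand the denominator, then invert'' route, the same fact resurfaces as a reflection identity $A(w)A(-w)=1$ for the generating function $A(w)=\sum_k a_k w^k$ (equivalently, the Lagrange computation of the denominator itself produces the coefficients $(-1)^k a_k$), and proving that reflection symmetry is exactly where the real work lies. The routine analytic points, namely validity of the global change of variables and the term-by-term integration in Watson's lemma, are standard once the algebraic structure above is in place.
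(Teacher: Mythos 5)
Your argument is correct, and it takes a genuinely different route from the paper's. The paper expands the denominator $D_n$ of \eqref{n!1} (the Fourier integral of the characteristic function of the normalized Poisson variable), obtaining the series $\sum_k(-1)^k a_k n^{-k}$, and then inverts that asymptotic series by an external input: the classical Stirling series for $\log\Gamma$ contains only odd powers of $1/n$, whence $1/D_n\asymp D_{-n}$ and the alternating signs disappear. You instead bypass the inversion entirely by computing $n!/(\sqrt{2\pi n}\,\e^{-n}n^n)$ directly from the Euler integral; your two key steps --- the substitution $\sigma^2=2(\e^y-1-y)$, which renders the exponent exactly Gaussian and turns the integrand into $\e^{-n\sigma^2/2}\,\partial_\sigma\e^{y(\sigma)}$, and the identity $\e^{y(\sigma)}=1+y(\sigma)+\tfrac{\sigma^2}{2}$, which reduces the odd Taylor coefficients to those of $y(\sigma)$ and hands the computation to Lagrange inversion --- are both sound. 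Indeed your $y(\sigma)$ is precisely the series $B$ of \eqref{B}, and your identity is the implicit equation \eqref{IB}, so you have in effect built into the proof of Theorem \ref{t1} the Lagrange-inversion structure that the paper only uncovers a posteriori in its final section. What your route buys is self-containedness (no appeal to the Bernoulli-number Stirling series) and a conceptual explanation of why $G^{-(2k+1)/2}$ appears; what the paper's route buys is the intermediate expansion of $D_n$ itself, which feeds Theorem \ref{t2} and the combinatorial identities, and a template reused for Theorem \ref{t3}. Your closing remark is also accurate: the ``reflection'' $\bigl(\sum_k a_k w^k\bigr)\bigl(\sum_k(-1)^k a_k w^k\bigr)=1$ is exactly the content of the paper's parity argument. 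The only points to write out in full are the ones you flag as routine: that $y\mapsto y\sqrt{G(y)}$ is a real-analytic increasing bijection of $\mathbb R$ (true, since $d\sigma/dy=(\e^y-1)/\sigma>0$ off the origin and equals $1$ there), and the tail and remainder estimates in the Laplace expansion, which go through because $\partial_\sigma\e^{y(\sigma)}$ has only polynomial growth.
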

      \begin{remark}
      \label{re}
      Formula \eqref{asym} is to be understood as an asymptotic
      expansion, that is, for
      any given $N\ge 0$,
      $$ n!= \sqrt{2\pi n }\,\e^{-n} n^n \Big(
     \,\sum_{k=0}^{N} \frac{1}{n^k}\,a_k \, +\,R_{N+1}\Big)
      $$
      with $
      R_{N+1}=O(\frac{1}{n^{N+1}})$ as $n\to \infty$. See for instance
      \cite {Olver}
      or \cite{WW} on the subject of asymptotic expansions.
      \end{remark}
      The function $G$ above is what we call a
       {\it  normalized left truncated exponential}. In general we define
       \begin{defn}
       \label {def}
       Given a function $F(x)=\sum f_n\frac{x^n}{n!}$ with $f_2\neq 0$,
       its  {\it  normalized left truncated}
       associated function $F_2$ is defined as
       $$
       F_2(x)=\frac{F(x)-f_0-f_1\,x}{f_2\,x^2/2}
       $$
       \end{defn}
      We also obtain another expression for $a_k$ in terms of the
       $3-$associated Stirling numbers of second kind.
      Recall that, for $r$ an integer $\ge1$, $n,k$ integers $\ge0$,
      the $r-$associated Stirling number of the  second kind $S_r(n,k)$
      is the number of
      partitions of a set of $n$ elements into $k$ blocks, all with at least
      $r$ elements. The convention is that $S_r(0,0)=1$.  Then we have
      \begin{theorem}
      \label {t2}
     The coefficients $a_k$ in \eqref{asym} are also given
     in terms of the $3$--associated Stirling numbers of second kind by
      \begin{equation}
      \label{c2}
       a_k\,=\,\sum_{j= 0}^{2k} (-1)^j\,
       \frac{\,S_3\big(2(j+k),j\big)}{2^{j+k}\,(j+k)!}
       \end{equation}
      \end{theorem}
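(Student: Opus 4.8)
The plan is to start from the explicit formula
$a_k=\frac{1}{2^k\,k!}\,\partial^{2k}\big(G^{-\frac{2k+1}{2}}\big)(0)$
furnished by Theorem~\ref{t1}, and to reorganize the Taylor series of $G^{-\frac{2k+1}{2}}$ at the origin so that the $3$--associated Stirling numbers of the second kind appear as its coefficients. The bridge between the two is the observation that $G$ is, up to the factor $x^2/2$, a genuinely left--truncated exponential: since
$\e^x-1-x-\frac{x^2}{2}=\frac{x^2}{2}\big(G(x)-1\big)$,
setting $\phi(x):=\e^x-1-x-\frac{x^2}{2}=\sum_{m\ge 3}\frac{x^m}{m!}$ gives $G(x)-1=\frac{2}{x^2}\,\phi(x)$. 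Recalling the bivariate exponential generating function of the $S_3$, namely $\frac{\phi(x)^j}{j!}=\sum_{n}S_3(n,j)\frac{x^n}{n!}$ (the EGF of partitions into $j$ blocks of size $\ge 3$), I can then convert powers of $G-1$ into series whose coefficients are $S_3$ numbers.

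First I would note that $G(0)=2\cdot\frac{1}{2!}=1$, so, writing $s=\frac{2k+1}{2}$, the power series $G^{-s}=\big(1+(G-1)\big)^{-s}$ is well defined and may be expanded by the generalized binomial theorem,
$$
G^{-s}=\sum_{j\ge 0}\binom{-s}{j}(G-1)^j=\sum_{j\ge 0}\binom{-s}{j}\frac{2^j}{x^{2j}}\,\phi(x)^j .
$$
Substituting the generating function of the $S_3$ yields
$$
G^{-s}=\sum_{j\ge 0}\binom{-s}{j}\,2^j\,j!\sum_{n\ge 3j}S_3(n,j)\,\frac{x^{\,n-2j}}{n!}.
$$
Applying $\partial^{2k}(\cdot)(0)=(2k)!\,[x^{2k}]$ forces $n=2(k+j)$, and the vanishing $S_3(n,j)=0$ for $n<3j$ confines the surviving index to $0\le j\le 2k$, which is exactly the range appearing in \eqref{c2}.

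It then remains to check that, after dividing by $2^k k!$, the weight multiplying each $S_3\big(2(k+j),j\big)$ collapses to $(-1)^j/\big(2^{k+j}(k+j)!\big)$. This is the single computational step that needs care: one writes $\binom{-\frac{2k+1}{2}}{j}=\frac{(-1)^j}{2^j\,j!}\prod_{i=0}^{j-1}(2k+1+2i)$ and evaluates the product of consecutive odd numbers through the double--factorial identity $(2m-1)!!=(2m)!/(2^m m!)$, obtaining $\prod_{i=0}^{j-1}(2k+1+2i)=\frac{(2(k+j))!\,k!}{(2k)!\,2^j\,(k+j)!}$; the factors $(2k)!$, $k!$, $j!$ and $(2(k+j))!$ produced by $\partial^{2k}$, by the binomial coefficient and by the EGF then cancel cleanly, leaving precisely $(-1)^j/\big(2^{k+j}(k+j)!\big)$. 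I expect this bookkeeping of the powers of $2$ and of the factorials to be the main—though entirely routine—obstacle; the conceptual content lies wholly in the identification $G-1=\frac{2}{x^2}\phi$ together with the $S_3$ generating function, after which the formula \eqref{c2} is forced.
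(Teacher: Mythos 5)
Your argument is correct, but it reaches \eqref{c2} by a genuinely different route than the paper does. The paper proves Theorem \ref{t2} analytically, by returning to the integral $D_n$ in \eqref{n!1}: it rewrites the integrand as $\e^{-\theta^2/2}\,\e^{nf(i\theta/\sqrt n)}$ with $f(z)=\e^z-1-z-z^2/2$, Taylor-expands $\e^{nf}$, integrates against the Gaussian to get $D_n=\sum_{j}\frac{(-1)^j}{n^j2^jj!}\partial^{2j}\e^{nf}(0)$, reads off $\partial^{2j}\e^{nf}(0)=\sum_{k}n^kS_3(2j,k)$ from the bivariate generating function \eqref{gfs}, reindexes, and finally inverts the series through the parity observation $1/D_n=D_{-n}$ inherited from the proof of Theorem \ref{t1}. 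You instead take the already-established formula \eqref{ak} as input and deduce \eqref{c2} by pure formal-power-series manipulation: the binomial expansion of $G^{-(2k+1)/2}=\big(1+(G-1)\big)^{-(2k+1)/2}$ with $G-1=\tfrac{2}{x^2}(\e^x-1-x-x^2/2)$, the identity $(\e^x-1-x-x^2/2)^j/j!=\sum_{n\ge 3j}S_3(n,j)x^n/n!$, and extraction of the coefficient of $x^{2k}$. The details check out: the support condition $n\ge 3j$ does confine the sum to $0\le j\le 2k$, and the double-factorial computation $\prod_{i=0}^{j-1}(2k+1+2i)=\frac{(2(k+j))!\,k!}{(2k)!\,2^j(k+j)!}$ collapses the weight to $(-1)^j/\big(2^{k+j}(k+j)!\big)$ exactly as claimed. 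It is worth noting that your computation is essentially the one the paper carries out later, in the proof of Proposition \ref{propgen}, to evaluate $b_k$ via Lagrange inversion; specializing that calculation to odd index $2k+1$ and dividing by $2^kk!$ reproduces your argument. Your route buys economy --- no second round of remainder estimates, Gaussian integrals, or series inversion; Theorem \ref{t2} becomes a formal corollary of Theorem \ref{t1}. The paper's route shows that $S_3$ enters directly from the integral representation, independently of Theorem \ref{t1}, which better motivates the subsequent comparison with Comtet's $d_3$ expansion.
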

      \vskip .5 cm
      \noi{\it Proof of Theorem \ref{t1}}.
      The strategy will consist in expanding the denominator in \eqref{n!1}
      in powers of $\frac 1n$,
      and then taking the inverse of the resulting series.
      From \eqref{ftrans} and \eqref{zn}, for $\theta \in \mathbb R$,
      \begin{equation}
      \label{ig}
      E\big(\e^{i\theta\,Z_n}\big)\,=\,
    \e^{n\,\big(\e^{\frac{i\theta}{\sqrt n}}-1-\frac{i\theta}{\sqrt
    n}\big)}\,=\,
     \e^{-\frac{\theta^2}{2}-\theta^2g(\frac{i\theta}{\sqrt{n}})},
    \end{equation}
    where, for $z\in \mathbb C$
    \begin{equation}
    \label{g} g(z):=\frac{\e^z-1-z-z^2/2}{z^2}\,=\,\sum _{k\ge
    1}\frac{z^k}{(k+2)!}.
    \end{equation}
    For given  $K\in \mathbb N$, consider a Taylor expansion of order $K$
     in powers of $z$
    of $\e^{-\theta^2 g(z)}$:
     \begin{equation}
    \label{te} \e^{-\theta^2\,g(z)}=\sum_{j=0}^{K}\frac{z^j}{j!}\,
    \partial^j\big(\e^{-\theta^2\,g(z)}\big)\big\vert_{z=0}\,+\,R_{K+1}(z)
    \end{equation}
    The remainder  $R_{K+1}$ satisfies the estimate
    \begin{equation}
    \label{rest} \big\vert R_{K+1}(z) \big\vert\le \frac{|z|^{K+1}}{(K+1)!}\,
    \sup_{\zeta\in[0,z]}\big\vert\,
    \partial^{K+1}\big(\e^{-\theta^2\,g(\zeta)}\big)\big\vert
    \end{equation}
    Substitution of \eqref{ig} and \eqref{te} in the denominator
     $D_n$ in \eqref{n!1} yields
    \begin{align}
    \nonumber
    D_n:=&\frac{1}{\sqrt{2\pi}}\,\int^{\pi\sqrt{n}}_{-\pi\sqrt{n}}\!d\theta\,
    E\big(\e^{i\theta \,Z_n }\big)=\frac{1}{\sqrt{2\pi}}\,
    \int^{\pi\sqrt{n}}_{-\pi\sqrt{n}}\!d\theta\,
    \e^{-\frac{\theta^2}{2}}\e^{-\theta^2\,g(\frac{i\theta}{\sqrt n})}
    \\
    \nonumber =& \frac{1}{\sqrt{2\pi}}\, \sum_{j=0}^{K}\,
    \int^{\pi\sqrt{n}}_{-\pi\sqrt{n}}\!d\theta\,
    \e^{-\frac{\theta^2}{2}}\big(\frac{i\theta}{\sqrt{n}}\big)^j
    \frac{1}{j!}\,\partial^j\Big(\e^{-\theta^2\,g(z)}\Big)\Big|_{z=0}
    \\
     \label{in}
    &\qquad \qquad +
    \frac{1}{\sqrt{2\pi}}\,\int^{\pi\sqrt{n}}_{-\pi\sqrt{n}}\!d\theta\,
    \e^{-\frac{\theta^2}{2}}\,R_{K+1}(\frac{i\theta}{\sqrt n}).
    \end{align}
    It can be easily seen by induction that, for $j\in \mathbb N$ and $z\in
    \mathbb C$
    $$
    \partial^j\,\big(\e^{-\theta^2\,g(z)}\big)=\e^{-\theta^2g(z)}
    P_{j,z}(\theta^2)
    $$
    where $P_{j,y}(\cdot) $ is a polynomial of degree $j$ whose coefficients
    depend on
    products of derivatives
     of $g$ of order up to $j$ evaluated in $z$.
     From \eqref{g}
     it is easy to see that $g$ and all its derivatives are
     bounded over compacts. Moreover, if we denote by
     $\mathcal R (z)$ the real part of $z$, we have
    \begin{equation}
    \label{expg}\big\vert \e^{-\theta^2\,g(\frac{i\theta}{\sqrt
    n})}\big\vert\,\le
    \e^{-\theta^2 \mathcal R\big( g (\frac{i\theta}{\sqrt n})\big)}
    \end{equation}
    and, for $x\in \mathbb R$,
    \begin{align}
    \nonumber \mathcal R\big(g(ix)\big)=\,&\mathcal R \left(\frac{\e^{ix}
    -1-ix+x^2/2}{-x^2}\right)
    =-\left(\frac {\cos x-1+x^2/2}{x^2}\right)\\
    \label{real} &= -\frac{x^2}{4!}\,\partial ^4\cos (ax)\quad \mbox { for some
    } a\in [0,1]
    \end{align}
    From \eqref{rest}, \eqref{expg} and \eqref{real}, we can estimate the last
    integral in
    \eqref{in} as follows:
     \begin{multline}
     \label{rk}
     \Big\vert \int^{\pi\sqrt{n}}_{-\pi\sqrt{n}}\!d\theta\,
    \e^{-\frac{\theta^2}{2}}\,R_K(\frac{i\theta}{\sqrt n})\Big\vert\\
    \le
     \frac{1}{(K+1)!}\int^{\pi\sqrt{n}}_{-\pi\sqrt{n}}\!d\theta\,
    \e^{-\frac{\theta^2}{2}}\,\big\vert\frac{i\theta}{\sqrt n}\big\vert ^{K+1}
    e^{\theta^2\frac{\pi^2}{4!}} \sup_{|y|\le\pi} \big\vert P_{K+1,y}(\theta^2)
    \big\vert \le
    \frac{C_{K}}{n^{\frac{K+1}{2}}},
    \end{multline}
    where $C_{K}$ is a constant which depends on $K$ but not on $n$. Then,
    observe that the
    difference of each integral in the sum in \eqref{in} with respect to the
    integral in the
    whole $\mathbb R$ is exponentially small in $n$: for any $j\ge 0$, there is
    some $\alpha$
    positive,
    \begin{multline}
    \label{e-n} \Big|\int^{\pi\sqrt{n}}_{-\pi\sqrt{n}}\!d\theta\,
    \e^{-\frac{\theta^2}{2}}\big(\frac{i\theta}{\sqrt{n}}\big)^j
    \frac{1}{j!}\,\partial^{j}\Big(\e^{-\theta^2\,g(x)}\Big)\Big|_{x=0}
    \\\,-\,
    \int^{\infty}_{-\infty}\!d\theta\,
    \e^{-\frac{\theta^2}{2}}\big(\frac{i\theta}{\sqrt{n}}\big)^j
    \frac{1}{j!}\,\partial^{j}\Big(\e^{-\theta^2\,g(x)}\Big)\Big|_{x=0}\Big|
    \\
    \le \int_{|\theta|>\pi\sqrt{n}}\!d\theta\,
    \e^{-\frac{\theta^2}{2}}\Big|\big(\frac{i\theta}{\sqrt{n}}\big)^j
    \frac{1}{j!}\,
    P_{j,0}(\theta^2) \Big|\,\le\,\e^{-\alpha n}
    \end{multline}
    Moreover, if we interchange the derivative with the integral in the second
    line above,
    and then compute the integral, we obtain:
    \begin{multline}
    \frac{1}{\sqrt \pi}\int^{\infty}_{-\infty}\!d\theta\,
    \e^{-\frac{\theta^2}{2}}\big(\frac{i\theta}{\sqrt{n}}\big)^j
    \frac{1}{j!}\,\partial^{j}\Big(\e^{-\theta^2\,g(x)}\Big)\Big|_{x=0}\,=
    \\ \label{coef}
    \begin{cases}
    \qquad 0\quad  &\mbox{ if $j$ odd}\\
    \big(\frac{i}{\sqrt{n}}\big)^j\,\frac{1}{j!}\,(j-1)\,!!\,
    \partial ^j\Big(\big( 1+2g(x)\big)^{-\frac{j+1}{2}}\Big)\Big\vert_{x=0}
    &\mbox{if $j$ even}
    \end{cases}
    \end{multline}
    Then,  for any given positive  $N \in \mathbb N$, consider $K=2N+1$ in
    \eqref{in},
     sum and substract
     the integrals in the whole line to each term in the sum,
    estimate the differences using \eqref{e-n}, evaluate the
     integrals as in \eqref{coef} and
    estimate   the term
     with the
    remainder using  \eqref{rk}.  Rename finally  the summation
     index to obtain:
    \begin{equation}
    \label{dn}
     \mbox { for any $N\ge0$,}\quad
      D_n\,=\,\sum_{k=0}^N \frac{(-1)^k}{n^k}\,\frac{1}{2^kk!}\,
    \partial^{2k}(G^{-\frac{2k+1}{2}})(0)
    +O\big(\frac{1}{n^{N+1}}\big),
    \end{equation}
    where we have used that $1+2g(x)=G(x)$, as defined in \eqref{ak}. Let us
    shorthand \eqref{dn} above as
    \begin{equation}
    \label{cald} D_n\,\asymp\,\sum_{k\ge 0}
    \frac{(-1)^k}{n^k}\,\frac{1}{2^kk!}\,
    \partial^{2k}(G^{-\frac{2k+1}{2}})(0),
    \end{equation}
    and recall we have
    \begin{equation}
    \label{1/d} n!\,=\,\frac{\sqrt{2\pi n }\,\e^{-n} n^n }{D_n}.
    \end{equation}
    Consider next the  the well known Stirling series, which is an  asymptotic
    expansion for large
    values of $|z|$ of the logarithm of the gamma function in powers of
    $\frac 1z$
     (see for instance Theorem 1.4.2  in \cite{aar}).
     Its coefficients are given in terms of the Bernoulli
    numbers $B_k$. In
    particular, it  implies
    $$
    \log(n!)\,\asymp\, \log \big(\sqrt{2\pi n }\,\e^{-n} n^n \big)+\sum_{k\ge1}
     \frac
    {B_{2k}}{2k(2k-1)\,n^{2k-1}}
    $$
    Since the series  $\mathcal S_n$ above  contains only odd powers of
    $\frac 1n$,
     it follows that
     $$
     \log{ \big(\frac{\sqrt{2\pi n }\,\e^{-n} n^n }{n!}\big)}\,\asymp\,\,-\,
     \mathcal S_{n}=\mathcal S_{-n}.
     $$
    Therefore,
    $$
    \frac{1}{D_n}\asymp\e^{-\mathcal S_{n}}=\e^{\mathcal S_{-n}}=
    \sum_{k\ge 0}
    \frac{1}{n^k}\,\frac{1}{2^kk!}\,
    \partial^{2k}(G^{-\frac{2k+1}{2}})(0)
    $$ and the theorem follows from \eqref{1/d}
    \qed

      \vskip .8 cm
      To
    prove
    Theorem \ref{t2}, we will use the generating function of the
     $r$--associated Stirling numbers already introduced.
     Namely, (see page 222,  exercise 7, Ch. 5 of \cite{Comtet})
     : for any $r\ge 1$
     \begin{equation}
     \label{gfs}
     H(t,u):=\,\e^{u\,(\frac{t^r}{r!}+
     \frac{t^{r+1}}{(r+1)!}\,+\cdots)} \,=\,
     \sum_{l,k \ge 0}\,S_r(l,k)\,u^k\,\frac{t^l}{l!}
     \end{equation}
     \bigskip
      \noi {\it Proof of Theorem \ref{t2}}\qquad
     Observe that, from the first equation in  \eqref{ig}
      we may write the integral in the
     denominator in \eqref{n!1}
     as
     \begin{equation}
     \label{if}
     \int^{\pi\sqrt{n}}_{-\pi\sqrt{n}} d\theta\,
    E\big(\e^{i\theta \,Z_n}\big)\,=\, \int^{\pi\sqrt{n}}_{-\pi\sqrt{n}}
    d\theta\,\e^{-\frac{\theta^2}{2}} \,\e^{nf(\frac{i\theta}{\sqrt{n}})},
     \end{equation}
     with
    $f(z)=\e^z-1-z-\frac{z^2}{2}$. If we expand
    $$
    e^{nf(z)}\,=\,\sum_{j\ge 0} \frac{z^j}{j!}\,\partial^j\e^{nf}(0),
    $$
    substitute in \eqref{if} with $z=\frac{i\theta}{\sqrt{n}}$
     and proceed as in the proof of Theorem \ref{t1},
    (that is, consider a finite Taylor expansion with remainder, interchange the
    sum and
    derivative with the integral, and observe that   the integral may be
    considered in the
    whole line with an error smaller than the remainder) we obtain that for any
    given $K$,
    $$
    D_n\,=\, \sum_{j=0}^{K}\frac{i^j}{(\sqrt{n})^j\,j!}\,
    \partial^j\,\e^{nf}(0)\frac{1}{\sqrt{2\pi}}\int^{\infty}_{-\infty}
    \,d\theta\,\theta^j \e^{-\frac{\theta^2}{2}}\,+\,R_{K+1}
    $$
    The remainder can be seen to satisfy  $R_{K+1}=O(\frac{1}{n^{(K+1)/2}})$.
    After computing
    the integral  and renaming the terms, we may shorthand the above as
    \begin{equation}
    \label{dnf} D_n\,=\,
    \sum_{j\ge0}\frac{(-1)^j}{n^j\,2^j\,j!}\partial^{(2j)}\e^{nf}(0)
    \end{equation}
    Next, from the generating function for $S_3$ \eqref{gfs} and observing that
    $S_3(N,k)=0$
    if $k\ge N>0$, we obtain
    $$\partial^{(2j)}\e^{nf}(0)=\sum_{k\le 2j} n^kS_3(2j,k)
    $$
    Substituting in \eqref{dnf}, and then summing over $l=k-j$, after recalling
    that from the
    combinatorial interpretation of  $S_3$
     it is easy to see that $S_3(2j,k)=0$ if $k\ge j>0$
    $$
    D_n\,=\, \sum_{j\ge 0,\,k\le 2j}(-1)^j\frac{S_3(2j,k)}{n^{j-k}\,2^j\,j!}
    =\sum_{l\ge 0}
    \frac{(-1)^l}{n^l}\sum_{k=0}^{2l}
    \frac{(-1)^k\,S_3\big(2(k+l),k\big)}{2^{k+l}(k+l)!}
    $$
    Concluding as in the proof of \ref{t1} that $\frac{1}{D_n}=D_{-n}$, we
    obtain \eqref{c2}
    \qed \vskip .8 cm From Theorems \ref{t1} and \ref{t2}, since the  asymptotic
    expansion of
     a function
    is  unique, we have that
    \begin{equation}
    \nn
    \frac{1}{2^k\,k!}\,\partial^{2k}\big(G^{-\frac{2k+1}{2}}\big)(0)\,=\,
    \sum_{j= 0}^{2k}
    (-1)^j\,
       \frac{\,S_3\big(2(j+k),j\big)}{2^{j+k}\,(j+k)!}
    \end{equation}
    Moreover, we have a second identity that follows from the
     expression for the
    coefficients
    of the expansion for the $\gamma$ function given in \cite{Comtet}, and
    already mentioned.
    It is
    \begin{equation}
    \label{co} \Gamma(x)\,=\,x^x\e^{-x}\frac{\sqrt{2\pi}}{\sqrt x}\,
    \sum_{k\ge0}
    \frac{c_k}{x^k}, \mbox  { with } c_k=\sum_{j= 0}^{2k} (-1)^j\,
       \frac{d_3\big(2(j+k),j\big)}{2^{j+k}\,(j+k)!}.
    \end{equation}
    Permutations without
     fixed points (in other words,  without  cycles of
    length 1) are known as
     derangements, and
    for natural $r\ge1$, $d_r(n,l)$ is the number of derangements  of a set of n
    elements,
    that  have $l$ cycles, all of length $\ge r$.
     Equating the  coefficients in \eqref{c2} and the corresponding ones
    resulting
     from \eqref{co} when considering $n!=n\Gamma(n)$,
      we get the remarkable
       combinatorial identity presented in the following proposition.
     \begin{proposition}
     \label{prop}
      For $S_3$ and $d_3$ as defined above and  for any given positive
      $k\in \mathbb N$,
    \begin{equation}
    \label{id} \sum_{j= 0}^{2k} (-1)^j\,
       \frac{S_3\big(2(j+k),j\big)}{2^{j+k}\,(j+k)!}\,=\,
       \sum_{j= 0}^{2k} (-1)^j\,
       \frac{d_3\big(2(j+k),j\big)}{2^{j+k}\,(j+k)!}.
    \end{equation}
    \end{proposition}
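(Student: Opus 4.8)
The plan is to deduce the identity directly from the uniqueness of asymptotic expansions, exactly as anticipated in the discussion preceding the statement. The left-hand side of \eqref{id} is, by Theorem \ref{t2}, the coefficient $a_k$ in the expansion \eqref{asym} of $n!$; the right-hand side is, by Comtet's formula \eqref{co}, the coefficient $c_k$ in the expansion of $\Gamma(x)$. Thus it suffices to show that $a_k=c_k$ for every $k$, and for this I would exhibit a single asymptotic expansion of $n!$ whose coefficients are simultaneously the $a_k$ and the $c_k$.

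First I would convert Comtet's expansion \eqref{co} for the gamma function into an expansion for the factorial. Rather than specialise to $\Gamma(n+1)$ --- which would force a re-expansion of $(n+1)^{n+1/2}$ in powers of $\frac1n$ --- the cleaner route is to use $n!=n\,\Gamma(n)$. Setting $x=n$ in \eqref{co} gives
$$
\Gamma(n)=n^{n}\e^{-n}\frac{\sqrt{2\pi}}{\sqrt n}\sum_{k\ge0}\frac{c_k}{n^k},
$$
and multiplying by $n$ turns the prefactor $n^{n}\e^{-n}\sqrt{2\pi}/\sqrt n$ into $\sqrt{2\pi n}\,\e^{-n}n^{n}$, so that
$$
n!\asymp \sqrt{2\pi n}\,\e^{-n}n^{n}\sum_{k\ge0}\frac{c_k}{n^k}.
$$
This is precisely the shape of \eqref{asym}, with the same prefactor and the same asymptotic scale $\{n^{-k}\}_{k\ge0}$, but with $c_k$ playing the role of $a_k$.

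Since the asymptotic expansion of a given function with respect to a fixed asymptotic sequence is unique --- the same fact already invoked in the proofs of Theorems \ref{t1} and \ref{t2} --- comparing this expansion with \eqref{asym} forces $a_k=c_k$ for all $k\ge0$. Substituting the explicit formula \eqref{c2} for $a_k$ and the explicit formula from \eqref{co} for $c_k$ then yields \eqref{id}.

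I expect no serious obstacle: the whole content of the proposition is carried by uniqueness, and the only point demanding care is the elementary bookkeeping in the step $n!=n\Gamma(n)$, where one must verify that the powers of $n$ and the constant $\sqrt{2\pi}$ recombine into exactly the prefactor of \eqref{asym}. What the argument does \emph{not} explain is the genuinely surprising feature that the two sides of \eqref{id} have quite different combinatorial meanings --- one an alternating sum of set partitions into blocks of size $\ge3$, the other an alternating sum of derangements with cycles of length $\ge3$; this coincidence is simply a byproduct of both sums computing one and the same analytic quantity.
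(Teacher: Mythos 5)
Your proposal is correct and follows essentially the same route as the paper, which obtains \eqref{id} by equating the coefficients of Theorem \ref{t2} with those arising from Comtet's expansion \eqref{co} after writing $n!=n\Gamma(n)$ and invoking the uniqueness of the asymptotic expansion. The paper's later Proposition \ref{propgen} gives a separate, direct combinatorial generalization, but for the proposition as stated your argument is the intended one.
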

    This identity is somehow surprising, since it is needed a precise balance to
    control the
    different
     growth of $S_3$ and $d_3$. We give a direct proof
     of a generalization of it in
     Proposition \ref{propgen}.
      Now, let us compute the generating function for $d_3(j,k)$.
      \begin{lemma}
     \label{lem}
     The generating function for  $d_3$ as defined above
     is
    \begin{equation}
    \label{gfd}
     \Phi(t,u)\,=\,(1-t)^{-u}\,\e^{-u\,(t+\frac{t^2}{2})}\,=\,\sum_{k,j\ge0}
     d_3(j,k)\,u^k\frac{t^j}{j!},
     \end{equation}
    \end{lemma}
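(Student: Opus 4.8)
The plan is to derive the generating function from the exponential formula for permutations decomposed into cycles. Recall that a derangement counted by $d_3(j,k)$ is a permutation of a $j$-element set having exactly $k$ cycles, all of length $\ge 3$. The fundamental combinatorial tool is that the exponential generating function for a labelled structure built as a set of connected components is the exponential of the exponential generating function for a single component, with an extra variable $u$ marking the number of components. Here a single component is a cycle of length $\ge 3$: the number of cyclic orderings of $\ell$ labelled elements is $(\ell-1)!$, so the exponential generating function for one cycle of length $\ge 3$, with the length restriction, is $\sum_{\ell\ge 3}(\ell-1)!\,\frac{t^\ell}{\ell!}=\sum_{\ell\ge 3}\frac{t^\ell}{\ell}$.

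First I would record the closed form of that single-cycle series. Since $\sum_{\ell\ge 1}\frac{t^\ell}{\ell}=-\log(1-t)$, truncating the first two terms gives
\begin{equation}
\nonumber
\sum_{\ell\ge 3}\frac{t^\ell}{\ell}\,=\,-\log(1-t)-t-\frac{t^2}{2}.
\end{equation}
Next I would invoke the exponential formula with $u$ marking cycles: the bivariate exponential generating function for sets of such cycles is
\begin{equation}
\nonumber
\Phi(t,u)\,=\,\e^{u\,\left(-\log(1-t)-t-\frac{t^2}{2}\right)}\,=\,(1-t)^{-u}\,\e^{-u\left(t+\frac{t^2}{2}\right)},
\end{equation}
which is exactly the asserted formula, and whose coefficient of $u^k\frac{t^j}{j!}$ is by construction $d_3(j,k)$.

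I expect the main (and only genuine) obstacle to be justifying the application of the exponential formula cleanly, namely verifying that the length restriction $\ell\ge 3$ is correctly encoded by deleting precisely the $\ell=1$ and $\ell=2$ terms from $-\log(1-t)$, and that marking connected components by $u$ is legitimate in this weighted setting. This is entirely analogous to the derivation of the $S_3$ generating function $H(t,u)$ in \eqref{gfs}, where a block of size $\ge r$ contributes $\frac{t^\ell}{\ell!}$ and the restriction removes the low-order terms; here the only difference is the cycle weight $(\ell-1)!/\ell! = 1/\ell$ in place of the set-partition weight $1/\ell!$. Once the single-cycle series is identified, the remaining computation is the routine closed-form evaluation of the logarithm above, so no delicate analysis is needed and the lemma follows immediately.
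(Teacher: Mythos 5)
Your proposal is correct and follows essentially the same route as the paper: compute the single-cycle exponential generating function $\sum_{\ell\ge 3}t^\ell/\ell=-\log(1-t)-t-\tfrac{t^2}{2}$ and apply the exponential formula with $u$ marking the number of cycles (the paper phrases this as first forming $\phi(t)=(1-t)^{-1}\e^{-(t+t^2/2)}$ and then taking $\Phi(t,u)=\phi(t)^u$, which is the same computation). No gap; the argument is complete.
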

    \noi{\it Proof }\ We first compute the exponential generating function
     of the cyclic permutations of length at least  $3$
    \begin{equation}
    \nn
      \sum_{k \ge 3}(n-1)!\,\frac{t^n}{n!}=\sum_{k\ge 3}
     \frac{t^n}{n}=\log\left(1-t\right)^{-1}-\Big(t+\frac{t^2}{2}\Big).
     \end{equation}
     By
    the exponential formula (see \cite{Stanley}, Corollary 5.1.6), we obtain
    that
    \begin{equation}
    \nn \phi(t)\,=\,(1-t)^{-1}\,\e^{-(t+\frac{t^2}{2})}
    \end{equation}
    \noindent is the exponential generating function for the number of
    derangements of a set
    of $n$ elements, whose cycles are all of order
     at least $3$. Then, the
    generating function
    \begin{equation}
    \nn \Phi(t,u)=\phi(t)^u=(1-t)^{-u}\,\e^{-u\,(t+\frac{t^2}{2})},
    \end{equation}
    \noindent also keeps track of the number of cycles on this kind of
    derangements, (see
    \cite{Stanley}, Example 5.2.2) obtaining equation (\ref{gfd}). \qed
 
    \bigskip
    By writing the generating function $H$ of $S_3$ (see \eqref{gfs}) and
     $\Phi$ in the form
    \begin{equation}
    \nn H(t,u)=\,\e^{u\,\(\,\e^{\,t}-1-t-\frac{t^2}{2}\,\)} \qquad
     \Phi(t,u)=\,\e^{u\,\(-\,\log (1-t) -t-\frac{t^2}{2}\,\)}
     \end{equation}
     the analogy  between them is apparent, and,  together with
     the identity \eqref{id}, suggests that a formula for the $a_k$ in terms of
     a {\it normalized left truncated} logarithm, in the sense
     of definition  \ref{def}, may hold. This proves to be true,
     yielding another formula, exactly alike to \eqref{asym}.
     \begin{theorem}
     \label{t3}
     The coefficients in the asymptotic expansion of $n!$
     \begin{equation}
     \nn
       n!= \sqrt{2\pi n }\,\e^{-n} n^n \Big(
     \,\sum_{k\ge 0} \frac{1}{n^k}\,a_k \Big)\quad
     \end{equation}
     \begin{align}
     \label{akl}
     &\mbox{ are also given by}\qquad
     a_k= \frac{1}{2^k\,k!}\,\partial^{2k} (L^{-\frac{2k+1}{2}})(0)\\
     \nn
     &\mbox{ for}\qquad
       L(x)\,=\,2\,\frac{-\log(1+x) \,+\,x}{x^2}\,=\, 2\sum_{j\ge 0}
       \frac{x^j}{(j+2)!}
     \end{align}
      \end{theorem}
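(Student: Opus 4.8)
The plan is to prove \eqref{akl} by mirroring the computations of Theorems \ref{t1} and \ref{t2}, with the exponential replaced throughout by its logarithmic counterpart and the associated Stirling numbers $S_3$ replaced by the derangement numbers $d_3$, and then to recognise the resulting truncated function as $L$. Set $\tilde f(z)=-\log(1-z)-z-z^2/2=\sum_{k\ge 3}z^k/k$, so that $\tilde f(z)=z^2\,\tilde g(z)$ with $\tilde g(z)=\sum_{m\ge 1}z^m/(m+2)$, exactly as $f=z^2g$ in \eqref{g}. By Lemma \ref{lem}, $\e^{u\tilde f(z)}=\Phi(z,u)=\sum_{l,k}d_3(l,k)\,u^k z^l/l!$, so $\tilde f$ plays for $d_3$ precisely the role $f$ plays for $S_3$ in \eqref{gfs}. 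Write $\tilde G:=1+2\tilde g$; a short computation shows $\tilde G(z)=2\,(-\log(1-z)-z)/z^2$, which is the normalized left truncation of $-\log(1-z)$ in the sense of Definition \ref{def}, while $L$ is that of $-\log(1+x)$. Since $L(x)=\tilde G(-x)$ and only even-order derivatives at $0$ occur, $\partial^{2k}\big(L^{-\frac{2k+1}{2}}\big)(0)=\partial^{2k}\big(\tilde G^{-\frac{2k+1}{2}}\big)(0)$. Thus \eqref{akl} is equivalent to the combinatorial identity
$$\frac{1}{2^{k}\,k!}\,\partial^{2k}\big(\tilde G^{-\frac{2k+1}{2}}\big)(0)=\sum_{j=0}^{2k}(-1)^{j}\,\frac{d_3\big(2(j+k),j\big)}{2^{j+k}\,(j+k)!},$$
whose right-hand side is, by \eqref{co} together with $n!=n\Gamma(n)$, exactly $c_k=a_k$ (it is the right member of the identity in Proposition \ref{prop}).

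To establish this identity I would run the derivations of Theorems \ref{t1} and \ref{t2} verbatim with $f,g,G,S_3$ replaced by $\tilde f,\tilde g,\tilde G,d_3$. On the derangement side, expanding $\e^{n\tilde f(z)}=\Phi(z,n)$ gives $\partial^{2j}\e^{n\tilde f}(0)=\sum_{k\le 2j}d_3(2j,k)\,n^k$; reindexing by $l=k-j$ and using that $d_3(2j,k)=0$ for $k\ge j>0$ (a derangement into $k$ cycles of length $\ge 3$ needs $3k\le 2j$) reproduces the right-hand side above, exactly as in the passage from \eqref{dnf} to \eqref{c2}. On the $\tilde G$ side, the only ingredient of Theorem \ref{t1} is the moment evaluation \eqref{coef}, which rests on the purely algebraic identity $\sum_{p\ge 0}\frac{(-1)^{p}}{p!}\,(2m+2p-1)!!\,y^{p}=(2m-1)!!\,(1+2y)^{-\frac{2m+1}{2}}$, valid for any $y$ and hence for the formal series $y=\tilde g(z)$. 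Applying it termwise produces the left-hand side. Matching the two evaluations of the same coefficient and flipping $n\mapsto -n$ as in \eqref{cald}--\eqref{1/d} yields the identity, and therefore \eqref{akl}.

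The main obstacle is that, unlike in Theorem \ref{t1}, there is no probabilistic integral representation to lean on: $\tilde f$ comes from $-\log(1-z)$, singular at $z=1$, so the object ``$\frac{1}{\sqrt{2\pi}}\int \e^{-\theta^2/2}\e^{n\tilde f(i\theta/\sqrt n)}\,d\theta$'' cannot be invoked as an honest asymptotic integral. The point to make rigorous is therefore that every step above is algebraic or combinatorial and can be phrased as an identity of formal power series in $1/n$: the generating-function extraction, the moment identity, the reindexing, and the $n\mapsto -n$ symmetry all transfer without change, so no convergence is needed. I expect the bookkeeping of this formal transfer---making precise that the chains \eqref{dn}--\eqref{cald} and \eqref{dnf}--\eqref{c2} used only the structure $\e^{u\,z^2(\cdot)}$ and the moment identity---to be the part requiring the most care, after which the identification $L=\tilde G(-\,\cdot\,)$ closes the argument.
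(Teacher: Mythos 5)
Your argument takes a genuinely different route from the paper's, and it contains one real logical weakness. The paper proves Theorem~\ref{t3} directly and self-containedly: starting from the Euler integral $\Gamma(x)=x^x\int_0^\infty \e^{-xt}t^{x-1}dt$, it integrates by parts and substitutes $u=\sqrt{x}(t-1)$ to write $\Gamma(x)=\frac{x^x\e^{-x}\sqrt{2\pi}}{\sqrt{x}}I(x)$ with $I(x)=\frac{1}{\sqrt{2\pi}}\int_{-\sqrt{x}}^{\infty}\e^{-u^2/2}\,\e^{u^2\ell(u/\sqrt{x})}du$ and $\ell(y)=\frac{\log(1+y)-y+y^2/2}{y^2}$, and then runs the Gaussian-moment computation of Theorem~\ref{t1} on this honest integral, which produces $L=1-2\ell$ directly. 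No series has to be inverted (here $I(x)$ multiplies rather than divides, so no $1/D_n=D_{-n}$ step is needed), no $d_3$ appears, and Comtet's expansion \eqref{co} is not used; on the contrary, the paper derives \eqref{co} afterwards by expanding the same integral $I(x)$ via the generating function $\Phi$ of $d_3$.

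That last point is where your proposal has a genuine gap: you invoke \eqref{co} (together with $n!=n\Gamma(n)$) to assert that $\sum_{j}(-1)^j d_3(2(j+k),j)/(2^{j+k}(j+k)!)$ equals $a_k$. But \eqref{co} is exactly the statement the paper flags as having no available proof (it is an exercise in Comtet's book), and the paper's own proof of it is a byproduct of the integral representation used to prove Theorem~\ref{t3}. As written, your argument therefore outsources the entire analytic content of the theorem to an unproved citation; closing the gap would require proving \eqref{co} first, and the natural way to do that is essentially the paper's proof of Theorem~\ref{t3} itself. The purely formal half of your plan is sound: the transfer of the $S_3$/$G$ computation to $d_3$/$\tilde G$ does reduce to the algebraic moment identity plus the observation that the two expansions are, at each order of $1/n$, finite rearrangements of the same double array (indeed $\partial^j(\e^{-\theta^2\tilde g})(0)=j!\sum_m(-\theta^2)^m d_3(j+2m,m)/(j+2m)!$), and the reduction via $L(x)=\tilde G(-x)$ with only even-order derivatives is correct. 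Note finally that once Theorem~\ref{t1} is in hand there is a shorter purely formal route that avoids \eqref{co} altogether: by the Lagrange inversion identification one has $a_k=b_{2k+1}/(2^k k!)$ and $\partial^{2k}(L^{-\frac{2k+1}{2}})(0)=c_{2k+1}$, while Proposition~\ref{C-B} (whose proof is independent of Theorems~\ref{t1}--\ref{t3}) gives $b_{2k+1}=c_{2k+1}$; this is precisely the observation made in Section~3 of the paper.
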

     \noi {\it Proof}\
     Consider the identity
     $$
     \Gamma(x)\,=\, x^x\int^{\infty}_{0} \!dt\,\e^{-xt}\,
     t^{x-1}\qquad x>0
     $$
     After integrating by parts once, and changing variables
     $\sqrt x\,(t-1)\to  u$, we obtain
     \begin{align}
     \label{Ga}
     \Gamma(x)\,&=\, \frac{x^x\,\e^{-x}\sqrt{2\pi}}{\sqrt x}\Big(
     \frac{1}{\sqrt{2\pi}}\int^{\infty}_{0} \!dt\,\e^{-x(t-1)}\,\e^{x\log t}
     \sqrt{x}\Big)\\
     \nn
     &\,=\,\frac{x^x\,\e^{-x}\sqrt{2\pi}}{\sqrt x}
     \Big(\frac{1}{\sqrt{2\pi}}\int^{\infty}_{-\sqrt x} \!du\,
     \e^{-x(\frac{u}{\sqrt x})}\,\e^{x\log (\frac{u}{\sqrt x}+1)}
     \Big)\\
     \nn
     &:=\,\frac{x^x\,\e^{-x}\sqrt{2\pi}}{\sqrt x}\,I(x)
     \end{align}
     Define the function
     \begin{equation}
     \label{l}
     \ell(y)=\frac{\log(1+y)-y+y^2/2}{y^2}\,=\,\sum_{k\ge1}\frac{(-1)^{k+1}}{k+2}
     \,y^k,
     \end{equation}
     to obtain, using the expansion
     $$
     \e^{u^2\,\ell(y)}=\sum_{j\ge0}\frac{y^j}{j!}\,\partial^je^{u^2\,\ell}(0)
     $$
     in \eqref{Ga} with $y=\frac{u}{\sqrt x} $
     \begin{align}
    \nonumber I(x)\,&=\,\frac{1}{\sqrt{2\pi}}\int^{\infty}_{-\sqrt x} \!du\,
     \e^{-\frac{u^2}{2}}\,\e^{u^2\,\ell(\frac{u}{\sqrt{x}})}\\
     &=\,
     \frac{1}{\sqrt{2\pi}}\int^{\infty}_{-\sqrt x} \!du\,
     \e^{-\frac{u^2}{2}}\sum_{j\ge0}\big(\frac{u}{\sqrt{x}}\big)^j
     \frac{1}{j!}\,\partial^je^{u^2\,\ell}(0)
     \\
     \nn
     &=\,\sum_{j\ge0}\,\frac{1}{j!}\frac{1}{x^{j/2}}
     \,\partial^j
     \Big(\frac{1}{\sqrt{2\pi}}\,\int^{\infty}_{-\sqrt x} \!du\,u^j
      \,
     \e^{-\frac{u^2}{2}\big(1-2\ell(y)\big)}\Big)\big\vert_{y=0}
     \end{align}
     It is clear that
    $$
    \Big(\frac{1}{\sqrt{2\pi}}\,\int^{\infty}_{-\sqrt x} \!du\,u^j
      \,
     \e^{-\frac{u^2}{2}\big(1-2\ell(y)\big)}\Big) \approx
     \begin{cases}\big(1-2\ell(y)\big)^{-\frac{j+1}{2}}
    (j-1)!!&\mbox{ if j even }\\
    0&\mbox{ otherwise, }\end{cases}
    $$
    (in the sense that the difference is exponentially small in $x$ as $x \to
    \infty$, the
    details are analogous to that in the proof of \ref{t1}). Renaming and
    reorganizing terms
    in the sum above, and observing that $1-2\ell(y)=L(y)$ as defined
     in \eqref{akl} it follows
    $$
    I(x)\,=\,\sum_{k\ge0}\frac{1}{2^k\,k!\,x^k}\,
    \partial^{2k}\big(L^{-\frac{2k+1}{2}}\big)(0)
    $$
    Taking $x=n$ and using that $n!=n\Gamma(n)$, we conclude the proof,
     from \eqref{Ga} and the above formula for $I$.
    \qed

    \bigskip
    Since a proof of  the expansion \eqref{co}  of Comtet does not
     seem to be available, we
    provide next a
     brief sketch of it, that follows from a slight modification
     of the arguments in the
     proof above, exactly as Theorem \ref{t2} was obtained by modifying
      the proof of Theorem \ref{t1}.

     \bigskip
    \noi{\it Proof of \eqref{co}}. Write the integral in \eqref{Ga} as
    \begin{equation}
    \nonumber I(x)\,=\,
     \frac{1}{\sqrt{2\pi}}\int^{\infty}_{-\sqrt x} \!du\,
     \e^{-\frac{u^2}{2}\,+x\,h(\frac{u}{\sqrt x})},\ \mbox{ where }
     h(y)=\log{(1+y)}-y+\frac{y^2}{2}.
     \label{exph}
     \end{equation}
    and expand
    $$\e^{x\,h(y)}=\sum_{k\ge0}\frac{y^k}{k!}\,\partial^k\e^{xh}(0).
    $$
    Substituting in the integral with $y=\frac{u}{\sqrt x}$,
    \begin{align}
    \nonumber I(x)&\,=\, \frac{1}{\sqrt{2\pi}}\int^{\infty}_{-\sqrt x} \!du\,
    \e^{-\frac{u^2}{2}}\,\sum_{k\ge0}\frac{u^k}{k!\,x^{k/2}}\,
    \partial^k\e^{xh}(0)\\
    \nonumber&\, =\sum_{k\ge0}
    \frac{1}{k!\,x^{k/2}}\,\partial^k\e^{xh}(0)\,\Big(
    \frac{1}{\sqrt{2\pi}}\int^{\infty}_{-\sqrt x} \!du\,
    \e^{-\frac{u^2}{2}}\,u^k\Big)
    \end{align}
    and integrating as before
    $$
    \frac{1}{\sqrt{2\pi}}\int^{\infty}_{-\sqrt x} \!du\,
    \e^{-\frac{u^2}{2}}\,u^k \approx \begin{cases}(k-1)!!&\mbox{ if k even }\\
    0&\mbox{ otherwise, }\end{cases}
    $$
    after renaming and collecting terms, we have
    $$
    I(x)\,=\,\sum_{j\ge 0 } \frac{1}{x^j\,2^j\,j!}
    \partial^{2j}\e^{x\,h}(0)
    $$
    Now, from \eqref{gfd} and the definition of $h$, $
    \Phi(y,x)\,=\,\e^{-x\,h(-y)}$, and
    $$
    \partial^{2j}\e^{x\,h}(0)\,=\,\partial^{2j}\e^{x\,h(y)}\vert_{y=0}
    \,=\,\partial^{2j}\e^{x\,h(-y)}\vert_{y=0} \,=\,\partial^{2j} \Phi(-x,y)
    \vert_{y=0}
    $$
     Hence, from \eqref{gfd}
    $$
    I(x)\,=\,\sum_{j\ge 0 } \frac{1}{x^j\,2^j\,j!}
    \partial^{2j}\Phi(-x,y)\vert_{y=0}\,=\,\,\sum_{j\ge 0 }
    \frac{1}{x^j\,2^j\,j!} \sum_{k=0}^{2j}\,d_3(2j,k)(-x)^k,
    $$ which changing $k+l=j$ and proceeding exactly as in the
    last part of the proof of Theorem \ref{t2}, yields  \eqref{co}.
    \bigskip
    \section {Lagrange inversion and recursive formulae.}
    We will identify the ${a_k}'s$ in \eqref{ak} and \eqref{akl} as factors of the
    odd
    coefficients of a pair of formal power series, with the aid of the Lagrange
    inversion
    formula, that we recall next in a suitable form.
    Let $R(x)$ be a formal power series of the form $R(x)=xT(x)$,
     where $T(x)$ is a formal
    power series with non zero constant term. Denote by $R^{\langle -1\rangle}$
    the
    substitutional inverse of $R$, i.e., $S=R^{\langle -1\rangle}$ means
    $S(x)T(S(x))=S(xT(x))=x$. Let $S(x)=\sum_{k=1}^{\infty}s_k\frac{x^k}{k!}$.
    The Lagrange
    inversion formula gives a simple recipe to compute the coefficients of the
    series $S$
    (see for example \cite {Bergeron}).
    \begin{equation}
    \label{Lagrange}
     s_k=\partial^{k-1}\left(T^{-1}\right)^{k}(0)=
    \partial^{k-1}\left(T^{-k}\right)(0).
    \end{equation}
    Define the following exponential formal power series

    \begin{align}\label{B}
    B(x)&=\sum_{k=1}^{\infty}b_k\,\frac{x^k}{k!}:=
    \Big(x\,\big(\frac{\e^x-1-x}{x^2/2}\big)^{1/2}\Big)^{\langle
    -1\rangle}\\
    \nonumber
    &=((2\e^x-2-2x)^{1/2})^{\langle -1\rangle}\\
    \label{C}
    C(x)&=\sum_{k=1}^{\infty}c_k\,\frac{x^k}{k!}:=\Big(x\big(\frac{-\log(1+x)+x}{x^2/2}\big)^{1/2}\Big)^{\langle
    -1\rangle}\\
    \nonumber &=((-2\log(1+x)+2x)^{1/2})^{\langle -1\rangle}.
    \end{align}
     \noindent Observe that from equations
     (\ref{ak}) and (\ref{akl}), and the Lagrange
    inversion formula \eqref{Lagrange} we identify
    $$ a_k=\frac{b_{2k+1}}{2^k k!}=\frac{c_{2k+1}}{2^k k!}
    =(2k+1)!!\,\tilde{b}_{2k+1}=(2k+1)!!\,\tilde{c}_{2k+1},$$ \noindent where
    $\tilde{b}_k:=b_k/k!$ and $\tilde{c}_k:=c_k/k!$.
     Then, the coefficients
    of the formal power series $B(x)$ and $C(x)$ coincide at odd powers.
     A much stronger result holds indeed.
    \begin{proposition}
    \label{C-B} The formal power series $B(x)$
     and $C(x)$ differ only at the
    coefficient of the quadratic term. More precisely, we have
    \begin{equation}
    \nn
    C(x)-B(x)=\frac{x^2}{2}.
    \end{equation}
    \end{proposition}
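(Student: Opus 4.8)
The plan is to bypass any coefficient-by-coefficient comparison and instead use the implicit functional equations that the two inverses satisfy. Write $R_B(x)=(2\e^x-2-2x)^{1/2}$ and $R_C(x)=(-2\log(1+x)+2x)^{1/2}$ for the two series inverted in \eqref{B} and \eqref{C}, so that $B=R_B^{\langle-1\rangle}$ and $C=R_C^{\langle-1\rangle}$; both are delta series (order exactly one), the square roots being taken with leading coefficient $+1$. Setting $u=B(x)$, the relation $x=R_B(u)$ squares to $x^2=2\e^{u}-2-2u$, that is
\begin{equation}
\nn
\e^{B(x)}=1+B(x)+\frac{x^2}{2},
\end{equation}
and the same computation applied to $C$ shows that $C(x)$ satisfies $x^2=-2\log(1+C(x))+2C(x)$, that is
\begin{equation}
\nn
\log\big(1+C(x)\big)=C(x)-\frac{x^2}{2}.
\end{equation}
Both are identities of formal power series, since $\e$ and $\log$ are here composed with series of zero constant term.

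Next I would simply verify the guess suggested by the statement itself. Put $w:=B(x)+\frac{x^2}{2}$, a series of order one; then $w-\frac{x^2}{2}=B(x)$, and the implicit equation for $B$ gives
\begin{equation}
\nn
\e^{\,w-\frac{x^2}{2}}=\e^{B(x)}=1+B(x)+\frac{x^2}{2}=1+w.
\end{equation}
Applying the formal logarithm yields $w-\frac{x^2}{2}=\log(1+w)$, hence $-2\log(1+w)+2w=x^2$, which is exactly the relation characterising $C$. It then remains to deduce that $w$ is genuinely $C(x)$, and not merely some solution of that relation.

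For this last step I would stay with the forward series rather than appeal to an abstract uniqueness result. The identity $-2\log(1+w)+2w=x^2$ says precisely that $R_C(w)^2=x^2$. Since $w=x+\cdots$ has order one, $R_C(w)=w+\cdots$ also has order one with leading coefficient $+1$, matching that of $x$; because squaring is injective on order-one formal power series once the sign of the leading coefficient is fixed (the ring of formal power series is an integral domain, and $R_C(w)+x$ has order one, hence is nonzero), we obtain $R_C(w)=x$, whence $w=R_C^{\langle-1\rangle}(x)=C(x)$. Unwinding the definition of $w$ gives $C(x)-B(x)=\frac{x^2}{2}$, as claimed.

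The computation is short, so the single point I would handle with genuine care is the bookkeeping of branches: both $R_B$ and $R_C$ are square roots of series vanishing to order two, and the argument relies on consistently choosing the root with leading coefficient $+1$, both so that $B$ and $C$ are the intended compositional inverses and so that the final passage from $R_C(w)^2=x^2$ to $R_C(w)=x$ is legitimate. The remaining manipulations---squaring the defining relations and applying $\log$ to $\e^{B(x)}$---are routine operations on formal power series.
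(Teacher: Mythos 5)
Your proof is correct and follows essentially the same route as the paper: derive the implicit equations $\e^{B(x)}-1-B(x)=x^2/2$ and $C(x)-x^2/2=\log(1+C(x))$ from the definitions as compositional inverses, and observe that the substitution $w=B(x)+x^2/2$ turns one into the other. The only difference is that where the paper disposes of uniqueness with the remark that all the operations are reversible for formal power series, you spell this out via the integral-domain argument showing $R_C(w)^2=x^2$ forces $R_C(w)=x$ --- a welcome but not essentially new addition.
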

   
    \begin{proof}
    From equations (\ref{B}) and (\ref{C}) we  obtain the implicit equations
    \begin{equation}\label{IB}
    \e^{B(x)}-1-B(x)=\frac{x^2}{2}
    \end{equation}
    \noindent and
    \begin{equation}\label{IC}
    C(x)-\frac{x^2}{2}=\log(1+C(x)).
    \end{equation}
    Taking $\log$ at both sides of  \eqref{IB} ,
    \begin{equation}\label{IBl}
    B(x)=\log\,\big(1+B(x)+\frac{x^2}{2}\big),
    \end{equation}
    \noindent which is equivalent to \eqref{IC} by the change
     $C(x)=B(x)+\frac{x^2}{2}$. All
    the operations to obtain respectively \eqref{IBl} and
     \eqref{IC} from \eqref{B} and \eqref{C} are reversible
      in the context of formal power series.
     Then, the result follows.
    \end{proof}
     Explicit expressions for the coefficients of the series
     $B(x)$ and $C(x)$ can be obtained
    by using Lagrange inversion formula, leading to the following generalization
    of the
    identity (\ref{id}).
    \begin{proposition}\label{propgen} The following identity holds for
     every $k\neq 2$
    \begin{multline}
    \nn
    \sum_{j=0}^{k-1}(-1)^j\frac{S_3(k+2j-1,j)}{(k+1)(k+3)\dots (k+2j-1)}\\=
    \sum_{j=0}^{k-1}(-1)^{k+j-1}\frac{d_3(k+2j-1,j)}{(k+1)(k+3)\dots (k+2j-1)}.
    \end{multline}
    \end{proposition}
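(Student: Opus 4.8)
The plan is to derive explicit Lagrange-inversion formulas for the exponential coefficients $b_k$, $c_k$ of the series $B$, $C$ in \eqref{B}--\eqref{C}, and then read the identity directly off Proposition \ref{C-B}. Indeed $C(x)-B(x)=x^2/2$ forces $c_2-b_2=1$ and $b_k=c_k$ for every $k\neq 2$, which is exactly why that value must be excluded; so the whole content is to show that $b_k$ equals the left-hand side and $c_k$ equals the right-hand side of the asserted equality. Concretely, I would apply \eqref{Lagrange} with $T(x)=G(x)^{1/2}$ for $B$ and $T(x)=L(x)^{1/2}$ for $C$, obtaining $b_k=\partial^{k-1}(G^{-k/2})(0)=(k-1)!\,[x^{k-1}]G^{-k/2}$ and likewise $c_k=(k-1)!\,[x^{k-1}]L^{-k/2}$.

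For $b_k$, write $G=1+2g$ with $g(x)=f(x)/x^2$ and $f(x)=\e^x-1-x-x^2/2$, and expand $G^{-k/2}=(1+2g)^{-k/2}$ by the binomial series. From \eqref{gfs} in the form $\e^{uf(x)}=\sum_{l,j}S_3(l,j)\,u^j x^l/l!$ one gets $f(x)^j=\sum_l S_3(l,j)(j!/l!)x^l$, hence $g^j=\sum_l S_3(l,j)(j!/l!)x^{l-2j}$. Extracting the coefficient of $x^{k-1}$ forces $l=k+2j-1$; since $S_3(l,j)=0$ unless $l\ge 3j$, the sum is confined to $0\le j\le k-1$. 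Using $\binom{-k/2}{j}2^j=(-1)^j k(k+2)\cdots(k+2j-2)/j!$ together with the telescoping $(k-1)!\,k(k+2)\cdots(k+2j-2)/(k+2j-1)!=1/[(k+1)(k+3)\cdots(k+2j-1)]$, the expression collapses exactly to the left-hand side of the proposition.

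The computation for $c_k$ is entirely parallel, with $L=1-2\ell$, $\ell(x)=h(x)/x^2$ and $h(x)=\log(1+x)-x+x^2/2$. Rewriting \eqref{gfd} as $\e^{u(-h(-t))}=\sum_{l,j}d_3(l,j)\,u^j t^l/l!$ and substituting $t=-x$ gives $h(x)^j=(-1)^j\sum_l d_3(l,j)(-1)^l(j!/l!)x^l$, so $\ell^j$ carries an extra sign $(-1)^l$. When the coefficient of $x^{k-1}$ is taken, $l=k+2j-1$ produces $(-1)^l=(-1)^{k-1}$, which is precisely the origin of the factor $(-1)^{k+j-1}$ attached to $d_3$ on the right-hand side. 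The same binomial simplification and telescoping then identify $c_k$ with the right-hand side, and equating $b_k=c_k$ for $k\neq 2$ completes the proof.

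The only delicate part is the bookkeeping in the two middle steps: simplifying $\binom{-k/2}{j}2^j$, carrying out the factorial telescoping that produces the products $(k+1)(k+3)\cdots(k+2j-1)$, and — in the logarithmic case — correctly tracking the parity sign $(-1)^l$ coming from $t\mapsto -x$. One must also invoke the vanishing conditions $S_3(l,j)=d_3(l,j)=0$ for $l<3j$, which pin down the summation range $0\le j\le k-1$ and guarantee that both sides are finite sums.
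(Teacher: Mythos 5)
Your proposal is correct and follows essentially the same route as the paper: Lagrange inversion gives $b_k=\partial^{k-1}(G^{-k/2})(0)$ and $c_k=\partial^{k-1}(L^{-k/2})(0)$, the binomial expansion combined with the generating-function identities for $S_3$ and $d_3$ (including the $(-1)^{l-j}$ sign from $t\mapsto -x$) identifies these with the two sides of the claimed equality, and Proposition \ref{C-B} supplies $b_k=c_k$ for $k\neq 2$. The simplification of $\binom{-k/2}{j}2^j$ and the factorial telescoping you describe are exactly the steps carried out in the paper's proof.
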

    \begin{proof}
    By the Lagrange inversion formula we have
    \begin{multline}
    \nonumber
    b_k=\partial^{k-1}
    \left(\frac{\e^x-1-x}{x^2/2}\right)^{-k/2}\Big|_{x=0}\\=\partial^{k-1}
    \left(1+2\frac{\e^x-1-x-x^2/2}{x^2}\right)^{-k/2}\Big|_{x=0}
    \end{multline}
    By the binomial identity
    \begin{multline}
    \label{binom}
    \left(1+2\, \frac{\e^x-1-x-x^2/2}{x^2}\right)^{-k/2}=
    \sum_{j=0}^{\infty}\binom{-k/2}{j}2^j\,
    \frac{(\e^x-1-x-x^2/2)^j}{x^{2j}}\\
    =\sum_{j=0}^{\infty}(-1)^j k(k+2)\dots
    (k+2(j-1))\,\frac{(\e^x-1-x-x^2/2)^j}{j!x^{2j}}\\
    =\sum_{j=0}^{\infty}(-1)^j k(k+2)\dots (k+2(j-1))\frac{1}{x^{2j}}\sum_{l\geq
    3j}S_3(l,j)\frac{x^l}{l!}.
    \end{multline}
    The last equality follows from  equation (\ref{gfs}) for $r=3$, after
    writing the
    exponential in power series of the exponent, and equating powers of $u$,  to
    obtain
    \begin{equation}
    \label{es3} \frac{(\e^x-1-x-x^2/2)^j}{j!}\,=\sum_{l\ge
    3j}S_3(l,j)\frac{x^l}{l!}.
    \end{equation}
    Interchanging sums in (\ref{binom}) we get
    \begin{equation}
    \label{se}
    \sum_{l=0}^{\infty}\sum_{3j\leq l} (-1)^j \frac{k(k+2)\dots
    (k+2(j-1))}{l(l-1)\dots(l-2j+1)}S_3(l,j)\frac{x^{l-2j}}{(l-2j)!}.
    \end{equation}
    The coefficient of $x^{k-1}/(k-1)!$ of this series, obtained by making
    $l-2j=k-1$, is
    equal to
    \begin{multline}
    \nn
    b_k=\sum_{j=0}^{k-1}(-1)^j\frac{k(k+2)\dots(k+2(j-1))}{k(k+1)\dots
    (k+2j-1)}S_3(k+2j-1,j)\\=\sum_{j=0}^{k-1}(-1)^j
    \frac{S_3(k+2j-1,j)}{(k+1)(k+3)\dots
    (k+2j-1)}
    \end{multline}
    Notice that from equation \ref{gfd}, proceeding as to obtain \eqref{es3}
     $$
     \frac{(-\log(1-x)-x-x^2/2)^j}{j!}
     =\sum_{l\ge 3j}d_3(l,j)\frac{x^l}{l!}.
     $$
     From that we easily obtain
    \begin{equation}\label{gfsd}\frac{(\log(1+x)-x+x^2/2)^j}{j!}=
    \sum_{l\geq 3j}(-1)^{l-j}d_3(l,j)\frac{x^l}{l!}.\end{equation}
     By expanding the binomial
    $$ \left(1-2\frac{\log(1+x)-x+x^2/2}{x^2}\right)^{k/2}=
    \left(\frac{-\log(1+x)+x}{x^2/2}\right)^{k/2},$$ \noindent using equation
    (\ref{gfsd}),
     and proceeding exactly as for the computation of
    $b_k$, we get
    \begin{equation*}
    c_k=\sum_{j=0}^{k-1}(-1)^{k+j-1}\frac{d_3(k+2j-1,j)}{(k+1)(k+3)\dots
     (k+2j-1)}.
    \end{equation*}
    The result then follows from Proposition \ref{C-B}.
    \end{proof}
    Now we state and prove recursive formulae to obtain $b_k$, $\tilde{b}_k$,
    $c_k$,  and
    $\tilde{c}_k$.
    \begin{proposition} The sequences $b_k$, $c_k$, $\tilde{b}_k$ and
    $\tilde{c}_k$ satisfy the following recursive formulae
    \begin{align}
    \label{BR} b_k&=-\frac{1}{k+1}\left(\binom{k}{2}b_{k-1}+
    \sum_{j=1}^{k-2}\binom{k}{j}b_{j+1}b_{k-j}\right)\\\label {TBR}
    \tilde{b}_k&=-\frac{1}{k+1}\left(\frac{k-1}{2}\,
    \tilde{b}_{k-1}+\sum_{j=1}^{k-2}(j+1)\tilde{b}_{j+1}\,
    \tilde{b}_{k-j}\right)\\
    \label{CR} c_k&=\frac{1}{k+1}\left(k\,c_{k-1}-
    \sum_{j=1}^{k-2}\binom{k}{j}c_{j+1}c_{k-j}\right)\\
    \label{TCR} \tilde{c}_k&=\frac{1}{k+1}\left(\tilde{c}_{k-1}-
    \sum_{j=1}^{k-2}(j+1)\,\tilde{c}_{j+1}\tilde{c}_{k-j}\right)
    \end{align}
    \noindent with $b_1=c_1=\tilde{b}_1=\tilde{c}_1=1$.
   \end{proposition}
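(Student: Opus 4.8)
The plan is to turn the two implicit equations \eqref{IB} and \eqref{IC} into first-order differential equations with polynomial coefficients, and then read off the recursions by comparing coefficients of $x^{k}/k!$. For $B$, I would differentiate \eqref{IB} to get $(\e^{B}-1)B'=x$, and then substitute $\e^{B}-1=B+x^{2}/2$ from \eqref{IB} itself; this eliminates the exponential and leaves the clean polynomial identity $(B+\tfrac{x^{2}}{2})\,B'=x$. For $C$, differentiating \eqref{IC} gives $C'-x=C'/(1+C)$, and clearing the denominator by multiplying through by $(1+C)$ produces $C\,C'=x(1+C)$. Since $B$ and $C$ are formal power series, term-by-term differentiation is legitimate and the resulting identities may be compared coefficientwise.

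Next I would extract coefficients using the fact that the coefficient of $x^{k}/k!$ in a product of exponential generating functions is a binomial convolution. Writing $B'=\sum_{n\ge0}b_{n+1}\,x^{n}/n!$, the coefficient of $x^{k}/k!$ in $B\,B'$ is $\sum_{j=0}^{k}\binom{k}{j}b_{j}\,b_{k-j+1}$, where the convention $b_{0}=0$ kills the $j=0$ term; the coefficient of $x^{k}/k!$ in $\tfrac{x^{2}}{2}B'$ is $\binom{k}{2}b_{k-1}$, while the right-hand side $x$ contributes only at $k=1$. Hence for $k\ge2$ one obtains $\sum_{j=1}^{k}\binom{k}{j}b_{j}\,b_{k-j+1}+\binom{k}{2}b_{k-1}=0$. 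The parallel computation for $C\,C'=x(1+C)$ gives $\sum_{j=1}^{k}\binom{k}{j}c_{j}\,c_{k-j+1}=k\,c_{k-1}$ for $k\ge2$, since the coefficient of $x^{k}/k!$ in $xC$ is $k\,c_{k-1}$.

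The remainder is bookkeeping. In each convolution the two boundary terms $j=1$ and $j=k$ both reduce to $b_{k}$ (using $b_{1}=1$), contributing $k\,b_{k}+b_{k}=(k+1)b_{k}$; isolating this factor solves for $b_{k}$ and for $c_{k}$, producing the prefactor $1/(k+1)$ in \eqref{BR} and \eqref{CR}. The leftover inner sum $\sum_{j=2}^{k-1}\binom{k}{j}b_{j}\,b_{k-j+1}$ is carried into the stated form $\sum_{j=1}^{k-2}\binom{k}{j}b_{j+1}b_{k-j}$ by the reflection $j\mapsto k-j$ together with $\binom{k}{k-j}=\binom{k}{j}$, and likewise for $C$. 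Dividing \eqref{BR} and \eqref{CR} through by $k!$ and using $b_{k-1}=(k-1)!\,\tilde b_{k-1}$, $b_{j+1}=(j+1)!\,\tilde b_{j+1}$, and $b_{k-j}=(k-j)!\,\tilde b_{k-j}$ collapses the binomial coefficients to the weights $\tfrac{k-1}{2}$ and $(j+1)$, yielding \eqref{TBR} and \eqref{TCR}. The initial value $b_{1}=c_{1}=1$ follows by matching the $x^{2}$ coefficient in \eqref{IB}: with $B=b_{1}x+O(x^{2})$ one gets $b_{1}^{2}=1$, and the positive root is the one singled out by the definitions \eqref{B} and \eqref{C}.

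I expect the only genuinely delicate point to be this index manipulation: one must peel off the two boundary terms of the convolution correctly to generate the factor $k+1$, and then verify that the reflection $j\mapsto k-j$ indeed transports the symmetric sum $\sum_{j=2}^{k-1}\binom{k}{j}b_{j}\,b_{k-j+1}$ onto the asymmetric-looking sum $\sum_{j=1}^{k-2}\binom{k}{j}b_{j+1}b_{k-j}$ written in the statement. Once the bookkeeping is set up, everything else is routine coefficient comparison, and the $C$-recurrences mirror the $B$-recurrences line for line.
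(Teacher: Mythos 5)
Your proposal is correct and follows essentially the same route as the paper: differentiate the implicit equations to obtain the polynomial identities $B'B=x-(x^2/2)B'$ and $C'C=xC+x$, then equate coefficients of $x^k/k!$, peel off the two boundary terms of the binomial convolution to produce the factor $k+1$, and divide by $k!$ to get the tilde versions. The only (immaterial) difference is that you differentiate \eqref{IB} and substitute back rather than differentiating its logarithmic form \eqref{IBl}; your coefficient bookkeeping, the reflection $j\mapsto k-j$, and the determination of $b_1=c_1=1$ are all accurate.
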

    \begin{proof}Formulae (\ref{TBR}) and (\ref{TCR})
     follow from formulae (\ref{BR}) and
    (\ref{CR}) respectively. Computing the derivative in both sides of equations
    (\ref{IBl})
    and (\ref{IC}), after clearing up we  obtain
    \begin{eqnarray}\label{DB}
    B'(x)B(x)&=&x-(x^2/2)B'(x)\\\label{DC} C'(x)C(x)&=&xC(x)+x.
    \end{eqnarray}
    Recurrence (\ref{BR}) (respectively (\ref{CR})) is obtained by
     equating coefficients of
    $x^k/k!$ in both sides of the resulting series in  (\ref{DB})
     (respectively (\ref{DC})).
    \end{proof}
    Observe that (\ref{TCR})  is similar to the recurrence
     obtained in \cite{Marsaglia} (see
    also \cite{Corless}).
    We list a few terms of  both formal power series $B(x)$ and C(x),
    \begin{eqnarray*}
    B(x)&=&x-x^2/6+x^3/36-x^4/270+x^5/4320+x^6/17010+\dots\\
    C(x)&=&x+x^2/3+x^3/36-x^4/270+x^5/4320+x^6/17010+\dots.
    \end{eqnarray*}
    The coefficients ${a_k}'s$ can be readily computed as well from formulae
    \eqref{ak} or \eqref{akl}, using for instance MAPLE. In particular, we checked
    that the first twenty terms coincide with that of \cite{W}.
     
    \section*{Acknowledgments}
    \nopagebreak It is our pleasure to thank Luis Baez--Duarte
     for suggesting the problem
    discussed here, and for useful conversations on the subject.
     We are grateful to Amilcar
    P\'erez for pointing out the expansion \eqref {co}.

    \end{document}